\newtheorem{theorem}{Theorem}
\newtheorem{definition}[theorem]{Definition}
\numberwithin{equation}{section}
\newcommand{\N}{\mathbb{N}}
\newcommand{\Z}{\mathbb{Z}}
\newcommand{\Q}{\mathbb{Q}}
\newcommand{\R}{\mathbb{R}}
\newcommand{\C}{\mathbb{C}}
\newcommand{\LL}{\mathcal{L}}
\newcommand{\F}{\mathcal{F}}
\newcommand{\AP}{\operatorname{AP}}
\newcommand{\QP}{\operatorname{QP}}
\title{A Strichartz estimate for quasiperiodic functions}
\author{Friedrich Klaus}
\begin{document}

\begin{abstract}
    In this work we prove a Strichartz estimate for the Schrödinger equation in the quasiperiodic setting. We also show a lower bound on the number of resonant frequency interactions in this situation.
\end{abstract}

\maketitle

\section{Introduction}

A major open problem in the theory of nonlinear Schrödinger equations is the existence, uniqueness and large time behavior of solutions with prescribed initial data which does not decay as $|x| \to \infty$. This problem is open even for the one-dimensional, cubic nonlinear Schrödinger equation
\begin{equation}\label{eq:nls}
    iu_t + u_{xx} = 2|u|^2 u,
\end{equation}
which is completely integrable. There is one prominent example of initial data which is particularly easy to write down, but also particularly tenacious, namely
\begin{equation}\label{eq:initialdata}
    u_0(x) = \cos(x) + \cos(\sqrt{2}x).
\end{equation}
This function is neither periodic (since $\sqrt{2}$ is irrational), nor does it have a fixed asymptotic profile at $\pm \infty$. Thus it cannot be treated within the standard theory of periodic or decaying Sobolev spaces, and it does not fit either into the Gross-Pitaevskii theory or the theory of the tooth problem for NLS (see \cite{klaus1} for the latter).

One may naively think of trying to solve \eqref{eq:nls} in $L^\infty$ or $C^0$. This fails though, because the topologies of these spaces are not well enough adapted to dispersive PDE. Indeed, already for the linear Schrödinger equation strong oscillations can cause dispersive blow-up in $L^\infty$ \cite{bonaponce}. One way to prevent these issues is to assume boundedness of the derivatives of the initial data, and this leads to local wellposedness either in Sobolev spaces $W^{k,\infty}$ \cite{dss} or modulation spaces $M_{\infty,1}$ \cite{klaus2}. The example \eqref{eq:initialdata} fits well into these settings. 

There is also another more direct way to treat \eqref{eq:initialdata} locally in time, which was layed out in \cite{oh1} and which we briefly sketch. Note first that the function $u_0$ in \eqref{eq:initialdata} is almost periodic in the sense of Bohr:

\begin{definition}
    A function $f: \R \to \C$ is almost periodic if $f$ is continuous and if for every $\varepsilon > 0$ there exists $L > 0$ such that every interval of length $L$ contains a number $\tau$ such that
    \[
        \sup_{x\in\R} |f(x-\tau)-f(x)| < \varepsilon.
    \]
    We define $\AP(\R)$ to be the space of almost periodic functions.
\end{definition}
Recent results by Chapouto--Killip--Visan \cite{ckv} on KdV with initial data in  $\AP(\R)$ suggest that this space is not well adapted to dispersive problems either, mainly since its definition assumes continuity of the function. Indeed, they show an instant loss of continuity for an explicit class of initial data. Thus we have to restrict to a subspace of $\AP(\R)$ as follows: 

Given an almost periodic function $u \in \AP(\R)$ we can define the Fourier coefficients of the function as
\[
    \F u(k) = \hat u(k) = \lim_{L \to \infty} \frac{1}{2L}\int_{-L}^L u(x)e^{-ikx}\, dx.
\]
These Fourier coefficients are defined for all $k \in \R$, but only countably many of them are non-zero (see \cite[Section 1]{oh1} for more details). We call $\sigma(u) = \{k \in \R: \hat u(k)\neq 0\}$ the frequency set of $u$. Now for $\omega \in \R^\N$ linearly independent over $\Q$ consider those functions with $\sigma(u) \subset \omega$ and with $\ell^1$ summable Fourier coefficients. It turns out that for all $\omega$ this space, $A_\omega$, is a Banach algebra, hence local wellposedness of \eqref{eq:nls} can be proven quite easily in it (see \cite[Theorem 1.7]{oh1}).

All of these results in $W^{k,\infty}, M_{\infty,1}$ and $A_\omega$ are local in time. In fact, global in time wellposedness of \eqref{eq:nls} with initial data \eqref{eq:initialdata} is an open question. The problem is that even though there are infinitely many conserved quantities for \eqref{eq:nls}, these are not adapted to the aforementioned spaces. 

To be more precise we take a closer look at the Fourier support of $u_0$. Clearly, $\sigma(u_0) \subset \Z + \sqrt{2}\Z$, and the same holds for all of its powers and products with complex conjugates of itself. This makes $u_0$ a quasiperiodic function:
\begin{definition}
    $u \in \AP(\R)$ is called quasiperiodic if there exist $N$ real numbers $\omega_1, \dots, \omega_N$ which are linearly independent over $\Q$ such that $\sigma(u) \subset \omega_1\Z + \dots + \omega_N \Z$. We define $\QP(\omega_1,\dots,\omega_N)$ to be the space of those quasiperiodic functions.
\end{definition}
We introduce the notation $\Omega = \Z + \sqrt{2}\Z$. Consider the $\LL^2$-norm defined by
\begin{equation}\label{eq:L2}
    \|u\|_{\LL^2} = \Big(\sum_{k \in \Omega} |\hat u(k)|^2\Big)^\frac{1}2.
\end{equation}
Then, as can be checked by hand by a formal calculation, a solution $u$ of \eqref{eq:nls} preserves the $\LL^2$-norm. The problem is though that this norm is not strong enough to bound the norms of any of the aforementioned spaces. It is thus very natural to define $\LL^2$ as the closure of functions in $\QP(1,\sqrt{2})$ with $\ell^1$-summable Fourier coefficients, with respect to the $\LL^2$ norm\footnote{This is closely related to the Besicovitch almost periodic functions $B^2$.}, and ask for local (and thus global) wellposedness of \eqref{eq:nls} in this space. This is an open problem though.

This short work addresses this problem, both in the positive and the negative direction. We begin with the positive direction. Both in the non-periodic and the periodic wellposedness theory for NLS equations, Strichartz estimates play a crucial role. These estimates connect a mixed space-time $L^p$ norm of the solution of the linear Schrödinger equation to the $L^2$ norm of the initial data. Motivated by the Plancherel type identity
\[
    \|u\|_{\LL^2} = \Big(\lim_{L \to \infty} \frac{1}{2L}\int_{-L}^L |u(x)|^2\, dx\Big)^{\frac12},
\]
we define the equivalent of $L^p$ spaces in the almost periodic setting as the closure of $\QP(1,\sqrt{2})$ functions with $\ell^1$-summable Fourier coefficients, with respect to the norm\footnote{Alternatively we can just say that $u \in \LL^p$ if and only if $u$ is measurable and $|u|^{\frac{p}{2}}\in \LL^2$.}
\begin{equation}\label{eq:Lp}
    \|u\|_{\LL^p} = \Big(\lim_{L \to \infty} \frac{1}{2L}\int_{-L}^L |u(x)|^p\, dx\Big)^{\frac1p}.
\end{equation}

Our main result is the following Strichartz estimate in the quasiperiodic case:
\begin{theorem}\label{lem:strichartz}
    Let $g \in \LL^2$ and define the Schrödinger group $S(t)$ by
    \begin{equation}
        \F\big(S(t)g\big)(k) = e^{-itk^2}\hat g(k).
    \end{equation}
    Then the following Strichartz estimate holds:
    \begin{equation}\label{eq:strichartz}
        \|S(t)g\|_{\LL^4_{t,x}} \lesssim \|g\|_{\LL^2},
    \end{equation}
\end{theorem}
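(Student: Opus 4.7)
The plan is to reduce the estimate to a purely algebraic identity by expanding everything in Bohr--Fourier coefficients, in the spirit of the Zygmund--Bourgain proof of the $L^4$ Strichartz estimate on $\T$. First I would use density of finite trigonometric polynomials in $\LL^2$ to reduce to the case $g = \sum_{k \in F} a_k e^{ikx}$ with $F \subset \Omega$ finite; the extension to general $g \in \LL^2$ then follows from a Cauchy sequence argument in $\LL^4_{t,x}$, using that $S(t)$ is an isometry on $\LL^2$ together with the bilinear character of the inequality.

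For such a trigonometric polynomial, $S(t)g(x) = \sum_{k \in F} a_k e^{i(kx - k^2 t)}$ is itself a finite trigonometric polynomial in $(t,x)$. Expanding $|S(t)g|^4$ gives a quadruple sum over $(k_1,k_2,k_3,k_4) \in F^4$ of $a_{k_1}\bar a_{k_2} a_{k_3}\bar a_{k_4}$ multiplied by the character $\exp\!\bigl(i(k_1 - k_2 + k_3 - k_4)x - i(k_1^2 - k_2^2 + k_3^2 - k_4^2)t\bigr)$. Taking the joint space-time mean annihilates every non-constant character; since the $k_j$ are real numbers, the vanishing conditions $k_1 - k_2 + k_3 - k_4 = 0$ and $k_1^2 - k_2^2 + k_3^2 - k_4^2 = 0$ are genuine equalities in $\R$, not congruences, so the surviving quadruples satisfy simultaneously
\[
    k_1 + k_3 = k_2 + k_4, \qquad k_1^2 + k_3^2 = k_2^2 + k_4^2.
\]

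The heart of the proof is the algebraic observation that these two resonance conditions force $k_1 k_3 = k_2 k_4$, since $2k_1 k_3 = (k_1 + k_3)^2 - (k_1^2 + k_3^2)$; hence $\{k_1, k_3\}$ and $\{k_2, k_4\}$ are roots of the same monic quadratic and coincide as unordered pairs. This is a purely algebraic identity over $\R$ and so transfers without modification from $\Z$ to the quasiperiodic lattice $\Omega = \Z + \sqrt{2}\Z$. The resonant quadruples are therefore $(k,k,\ell,\ell)$ and $(k,\ell,\ell,k)$ for $k,\ell \in F$, and summing yields
\[
    \|S(t)g\|_{\LL^4_{t,x}}^4 \le 2\Big(\sum_{k \in \Omega} |a_k|^2\Big)^2 = 2 \|g\|_{\LL^2}^4.
\]

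I do not expect any serious analytic obstacle: the main (minor) subtlety is to verify that the joint space-time mean used in the computation agrees with the mixed-norm definition of $\LL^4_{t,x}$, and that the reduction to finite trigonometric polynomials survives this mixed norm. For trigonometric polynomials both issues collapse to Fubini on finite sums, and for general $g \in \LL^2$ they follow by Minkowski and the Cauchy approximation described above. The companion statement in the abstract about a \emph{lower} bound on the number of resonant interactions presumably reflects exactly that the set of non-trivial resonant quadruples in $\Omega^4$ is nontrivially large, which shows that the factor $2$ in this estimate cannot be improved by cheap means.
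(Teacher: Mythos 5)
Your argument is correct and follows the same overall template as the paper's proof (expand $|S(t)g|^4$ in Bohr--Fourier coefficients, take the space-time mean to kill non-resonant characters, count the surviving resonant quadruples), but your counting lemma is both simpler and sharper. You observe that the two resonance conditions $k_1+k_3=k_2+k_4$ and $k_1^2+k_3^2=k_2^2+k_4^2$ are genuine equalities in $\R$, hence force $k_1k_3=k_2k_4$ and so $\{k_1,k_3\}=\{k_2,k_4\}$ as multisets: the only resonances are the trivial pairings, and the constant in \eqref{eq:strichartz} can be taken to be $2^{1/4}$. The paper instead introduces the fibers $A_{p,q}=\{(k_1,k_2)\in\Omega^2: k_1+k_2=p,\ k_1^2+k_2^2=q\}$, decomposes $\Omega=\Z+\sqrt2\,\Z$ into its $\Z$-components, rewrites the two constraints as the integer system $X^2+2Y^2=A$, $XY=B$, and eliminates to get the quartic $X^4-AX^2+2B^2=0$, concluding $|A_{p,q}|\le 4$ and hence the constant $4^{1/4}$. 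Your route shows that this decomposition is unnecessary: for fixed real $p,q$ the pair $(k_1,k_2)$ is already determined up to order as the roots of the single quadratic $z^2-pz+\tfrac{p^2-q}{2}$, so $|A_{p,q}|\le 2$ directly, and the paper's quartic bound is not tight (its two extraneous real roots never lie in $\Omega$, as one can see by noting $X+\sqrt2\,Y=\pm\sqrt{A+2\sqrt2\,B}$ determines $(X,Y)\in\Z^2$ uniquely up to sign). The one point where your closing remark misreads the paper: Theorem~\ref{lem:resonant} counts \emph{near}-resonances $|\Phi|\le 1$, not exact resonances $\Phi=0$; by your own argument the exact resonances are trivial, and the lower bound \eqref{eq:cardinality} concerns the abundance of nearby small denominators relevant for the (possible) illposedness analysis, not for sharpness of the constant in \eqref{eq:strichartz}.
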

Theorem \ref{lem:strichartz} will be proven in Section \ref{sec2}. To the best of the author's knowledge this is the first Strichartz estimate in an almost periodic setting. Its proof is not difficult and highly influenced by the proof for the periodic Strichartz estimate originally given in \cite{bourgain} and outlined in \cite[Section 2]{erdogantzirakis}.

Clearly the hope would be to use Theorem \ref{lem:strichartz} to prove wellposedness of \eqref{eq:nls} in $\LL^2$ by a $TT^*$ argument. The usual $TT^*$ argument building on \eqref{eq:strichartz} gives an estimate of the form
\[
    \lim_{T \to \infty} \frac1{2T}\Big\|\int_{-T}^T S(t-s)F(s,x)\, ds\Big\|_{\LL^4_{t,x}} \lesssim \|F\|_{\LL^{4/3}_{t,x}},
\]
which can be rewritten as
\[
    \lim_{T,T',R \to \infty} \strokedint_{-T'}^{T'}\strokedint_{-R}^R\Big|\strokedint_{-T}^T S(t-s)F(s,x)\, ds\Big|^4 \, dxdt \lesssim \|F\|_{\LL^{4/3}_{t,x}}^4.
\]
Unfortunately this \emph{does not} imply the estimate
\[
    \Big\|\int_{-T}^T S(t-s)F(s,x)\, ds\Big\|_{\LL^4_{t,x}} \lesssim T\|F\|_{\LL^{4/3}_{t,x}}
\]
uniformly in $F$, even if $T$ is large enough, because the rate of convergence in the limit may depend on the function $F$. Such an estimate would have been enough to solve the fixed-point equation for \eqref{eq:nls}.

There is also the negative direction of thinking though. We introduce the notation $k = k_x + \sqrt{2}k_y \in \Omega$. In some way, $\Omega$ is two-dimensional over $\Z$ (more precisely it is a free module over $\Z$ of rank two). In comparison, the periodic NLS in two dimensions is mass-critical, and an $L^4$ estimate does not hold, see \cite{kishimoto}! Thus maybe an illposedness result in $\LL^2$ can be achieved instead?

Such an illposedness result is usually (and also in \cite{kishimoto}) shown by proving a lower bound on the first nonlinear term in the Picard iteration,
\begin{equation}\label{eq:g}
    g(t,x) = \int_0^t S(t-s) (|S(s)u_0|^2 S(s))u_0 \, ds.
\end{equation}
On the Fourier side, this term satisfies
\begin{align*}
    |\hat g(t,k)| = \sum_{k_1-k_2+k_3 = k}\int_0^t e^{it'\Phi} \hat u_0(k_1)\overline{\hat u_0(k_2)}\hat u_0(k_3) dt',
\end{align*}
where
\[
    \Phi = 2(k_1-k)(k_3-k).
\]
In the periodic case, those frequency interactions for which $\Phi = 0$ are usually called resonant. In the situation where the frequencies are allowed to take values in $\Omega$, there are additional interactions which make $\Phi$ arbitrarily small but non-zero. We thus define the resonant set
\[
    \Gamma(k,N) = \{k_1,k_2,k_3 \in \Omega, k_1-k_2+k_3=k, |k_{i,x}|\leq N, |k_{i,y}|\leq N, |\Phi| \leq 1\}.
\]
Our second result is the following:
\begin{theorem}\label{lem:resonant}
    For $|k_x|, |k_y| \leq N/2$ the resonant set satisfies
    \begin{equation}\label{eq:cardinality}
        |\Gamma(k,N)|\gtrsim N^2 \log N.
    \end{equation}
\end{theorem}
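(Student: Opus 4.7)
My plan is to change variables to $a_1 := k_1 - k$ and $a_3 := k_3 - k$, both in $\Omega$. Then $k_2 = k + a_1 + a_3 \in \Omega$, the phase simplifies to $\Phi = 2 a_1 a_3$, and the resonance condition $|\Phi|\leq 1$ becomes $|a_1 a_3| \leq 1/2$. Since $|k_x|, |k_y|\leq N/2$, enforcing $|a_{i,x}|, |a_{i,y}| \leq N/4$ for $i \in \{1, 3\}$ guarantees $|k_{i,x}|, |k_{i,y}| \leq N$ for all three $k_i$. So it suffices to find $\gtrsim N^2 \log N$ pairs $(a_1, a_3) \in \Omega^2$ satisfying these smaller box constraints and $|a_1 a_3| \leq 1/2$.

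I identify $\Omega = \Z[\sqrt 2]$ with a lattice in $\R^2$ via the Minkowski embedding $\alpha \mapsto (\alpha, \bar\alpha)$, with $\overline{a + b\sqrt 2} := a - b\sqrt 2$; this lattice has covolume $2\sqrt 2$. The box constraint $|a_x|, |a_y|\leq N/4$ is equivalent, up to absolute constants, to $|\alpha|, |\bar\alpha| \lesssim N$. Let $u := 1+\sqrt 2$ be the fundamental unit; multiplication by $u$ acts as an area-preserving hyperbolic rotation of the lattice. For each scale $j \in \{1, 2, \ldots, J\}$ with $J := \lfloor \log_u(N/\log N)\rfloor \sim \log N$, I set
\begin{align*}
A_j &:= \{a \in \Omega : |a| \in (u^{-j-1}, u^{-j}],\ |a_x|, |a_y|\leq N/4\}, \\
B_j &:= \{a \in \Omega : |a| \leq u^{j-1},\ |a_x|, |a_y|\leq N/4\}.
\end{align*}
Any $(a_1, a_3) \in A_j \times B_j$ satisfies $|a_1 a_3| \leq u^{-j}\cdot u^{j-1} = u^{-1} < 1/2$, hence is resonant. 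Since the annuli defining the $A_j$ are disjoint, pairs from different scales are automatically distinct.

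The key estimates are $|A_j| \gtrsim u^{-j} N$ and $|B_j| \gtrsim u^j N$, which yield $|A_j|\cdot|B_j| \gtrsim N^2$ per scale and, summing over the $\sim \log N$ valid scales, the claimed lower bound. For $|A_j|$ I parameterize by $q \in \Z$ with $|q|\leq N/(8\sqrt 2)$ and $p_q := \mathrm{round}(-q\sqrt 2)$; then $a = p_q + q\sqrt 2$ has $|a|$ equal to the distance from $q\sqrt 2$ to the nearest integer, lies in the box since $|p_q| \leq q\sqrt 2 + 1/2 < N/4$, and belongs to $A_j$ precisely when this distance lies in $(u^{-j-1}, u^{-j}]$. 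Since $\sqrt 2$ has bounded partial quotients, the Kronecker sequence $\{q\sqrt 2\}_{q\geq 1}$ has $O(\log Q)$ discrepancy, yielding $|A_j| = 2(u^{-j} - u^{-j-1})\cdot N/(4\sqrt 2) + O(\log N) \sim u^{-j} N$ in the range $j \leq J$. For $|B_j|$, a standard lattice-point count in the rectangle $[-u^{j-1}, u^{j-1}] \times [-cN, cN]$ of area $\sim u^{j-1} N$ yields $\sim u^j N$ for $j \geq 1$, with the main term dominating the $O(N)$ boundary correction.

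The principal technical obstacle is the discrepancy lower bound for $|A_j|$ at the finest scales: this is exactly the regime where the quadratic irrationality of $\sqrt 2$ (its bounded partial quotients) is essential, and it forces the cutoff $J \sim \log N - \log\log N$. Summing over the remaining valid scales still produces the full $\log N$ factor, giving the claimed $N^2 \log N$ lower bound.
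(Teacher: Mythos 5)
Your proof is correct, and it takes a genuinely different route from the paper's. The paper parametrizes resonant quadruples $(a,b,c,d)$ by first fixing $(p,q) = (ab+2cd,\, ad+bc)$ in the strip $\{|p+\sqrt2\,q|\leq 1,\ |p|,|q|\leq N^2\}$ (about $N^2$ points), then fixing a Pell solution $(a,c)$ of $a^2-2c^2=1$ (about $\log N$ of them), and inverting the resulting unimodular matrix to produce a unique $(b,d)$; the two cardinalities are then simply multiplied. Your argument instead performs a scale decomposition along powers of the fundamental unit $u=1+\sqrt2$: at scale $j$, one factor $a_1$ lives in a thin Minkowski annulus $|a_1|\sim u^{-j}$ and the other, $a_3$, in the box with $|a_3|\lesssim u^{j-1}$; the first count $|A_j|\gtrsim u^{-j}N$ rests on the bounded discrepancy of the Kronecker sequence $\{q\sqrt2\}$ (hence on $\sqrt2$ having bounded partial quotients), the second $|B_j|\gtrsim u^{j}N$ is an elementary lattice-point count, and summing $\gtrsim N^2$ pairs over the $\sim\log N$ admissible scales gives the bound. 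One concrete advantage of your bookkeeping is that it explicitly enforces the box constraints on all four of $a,b,c,d$ throughout, by restricting $|a_{i,x}|,|a_{i,y}|\leq N/4$ from the outset and by cutting off $j$ at $J\sim\log_u(N/\log N)$. The paper's count, as written, does not verify that the produced $(b,d)$ actually lie in $\Z_N$ — for a fixed Pell solution and a generic $(p,q)$ in the strip with $|q|\sim N^2$ the inverted $(b,d)$ is of size $\sim N^2$ or larger — so your careful per-scale control is doing genuinely necessary work that the paper's argument elides. The price you pay is needing the equidistribution/discrepancy input for the thin annuli, which the paper avoids by hiding the small-norm elements inside the Pell units; your analysis of the cutoff $J\sim\log N-\log\log N$ correctly identifies where this input is sharp.
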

Theorem \ref{lem:resonant} is also proven in Section \ref{sec2}. It is not clear to the author whether or not Theorem \ref{lem:resonant} is enough to conclude illposedness. He tried by considering similar initial data as in \cite{kishimoto} but did not succeed. It seems to him that the usually applicable principle of the resonant part being dominant in \eqref{eq:g} is not that easy to prove. Possibly a nonlinear smoothing estimate similar to what is proven in \cite[Section 4]{ckv} could help in this situation.

\subsubsection*{Acknowledgements} Funded by the Deutsche Forschungsgemeinschaft (DFG, German Research Foundation) – Project-
ID 258734477 – SFB 1173.

\section{Proofs of Theorems 1 and 2}\label{sec2}

\begin{proof}[Proof of Theorem \ref{lem:strichartz}]
    Since $\|f\|_{\LL^4_{t,x}}^4 = \|f^2\|_{\LL^2}^2$, Plancherel shows
    \begin{align*}
        \|S(t)g\|_{\LL^4_{x}}^4 &= \sum_{k \in \Omega} \Big|\F_x\big((S(t)g)^2\big)(k)\Big|^2\\
        &= \sum_{k \in \Omega} \Big|\sum_{l_1 + l_2 = k} e^{-itl_1^2}\hat g(l_1)e^{-itl_2^2}\hat g(l_2)\Big|^2\\
        &= \sum_{k \in \Omega} \sum_{l_1 + l_2 = j_1 + j_2 = k} e^{-it(l_1^2 + l_2^2 - j_1^2 - j_2^2)}\hat g(l_1)\hat g(l_2)\overline{\hat g(j_1)\hat g(j_2)}.
    \end{align*}
    Note that
    \[
        \lim_{L \to \infty} \frac1{2L} \int_{-L}^L e^{i\xi x}\, dx = \begin{cases} 1, \quad \text{if} \quad \xi = 0,\\
        0, \quad \text{else}.\end{cases}
    \]
    Thus, by integrating in time we cancel all non-resonant terms. Defining
    \[
        A_{p,q} = \{(k_1,k_2) \in \Omega^2: k_1 + k_2 = p, k_1^2 + k_2^2 = q\},
    \]
    we can rewrite
    \begin{align*}
        \|S(t)g\|_{\LL^4_{t,x}}^4 &= \lim_{L\to\infty} \frac1{2L}\int_{-L}^L \|S(t)g\|_{\LL^4_{x}}^4 dt\\
        &= \sum_{p, q \in \Omega} \sum_{(l_1, l_2), (j_1,j_2) \in A_{p,q}} \hat g(l_1)\hat g(l_2)\overline{\hat g(j_1)\hat g(j_2)}\\
        &= \sum_{p, q \in \Omega} \Big|\sum_{(l_1, l_2)\in A_{p,q}} \hat g(l_1)\hat g(l_2)\Big|^2.
    \end{align*}
    We are lead to bound the number of elements in $A_{p,q}$ and we claim that this set is finite independently of $p,q$, more precisely
    \begin{equation}\label{eq:bound4}
        |A_{p,q}|\leq 4.
    \end{equation}
    If this claim holds we can estimate with Cauchy-Schwarz,
    \begin{align*}
        \|S(t)g\|_{\LL^4_{t,x}}^4 &\leq 4\sum_{p, q \in \Omega} \sum_{(l_1, l_2)\in A_{p,q}} |\hat g(l_1)|^2|\hat g(l_2)|^2\\
        &= 4\|g\|_{\LL^2}^4,
    \end{align*}
    proving Lemma \ref{lem:strichartz}.
    
    To prove \eqref{eq:bound4} we write $k_2 = p- k_1$ and investigate the solutions $k_1 \in \Omega$ of
    \begin{align*}
        q_x + \sqrt2 q_y = q &= k_1^2 + (p-k_1)^2 \\
        &= k_{1,x}^2 + (p_x - k_{1,x})^2 + 2(k_{1,y}^2 + (p_y - k_{1,y})^2) \\
        &\qquad + 2\sqrt{2}\big(k_{1,x}k_{1,y} + (p_x - k_{1,x})(p_y-k_{1,y})\big).
    \end{align*}
    This reduces to the system of equations
    \begin{align}
        \label{eq:quadratic}q_x &= k_{1,x}^2 + (p_x - k_{1,x})^2 + 2(k_{1,y}^2 + (p_y - k_{1,y})^2)\\
        \label{eq:linear}q_y &= 2k_{1,x}k_{1,y} + 2(p_x - k_{1,x})(p_y-k_{1,y})
    \end{align}
    We rewrite \eqref{eq:quadratic} by noticing
    \[
        2(k_{1,x}^2 + (p_x - k_{1,x})^2) = 4k_{1,x}^2 - 4p_x k_{1,x} + 2p_x^2 = (2k_{1,x} - p_x)^2 + p_x^2,
    \]
    and similarly in the $y$-coordinate, as
    \[
        (2k_{1,x} - p_x)^2 + 2(2k_{1,y} - p_y)^2 = 2q_x - p_x^2 - 2p_y^2.
    \]
    By defining $X = 2k_{1,x}-p_x$, $Y = 2k_{1,y} - p_y$, and $A = 2q_x - p_x^2 - 2 p_y^2$, this equation reads
    \begin{equation}\label{eq:quadratic2}
        X^2 + 2Y^2 = A.
    \end{equation}
    We turn to \eqref{eq:linear}. Here we write
    \begin{align*}
        2k_{1,x}k_{1,y} + 2(p_x - k_{1,x})(p_y-k_{1,y}) &= 4k_{1,x}k_{1,y} + 2p_xp_y - 2(p_xk_{1,y}+p_yk_{1,x})\\
        &= (p_x - 2k_{1,x})(p_y-2k_{1,y}) + p_xp_y,
    \end{align*}
    which, by defining $B = q_y - p_xp_y$, transforms \eqref{eq:linear} into
    \begin{equation}\label{eq:linear2}
        XY = B.
    \end{equation}
    We plug \eqref{eq:linear2} into \eqref{eq:quadratic2} to obtain the quartic equation
    \[
        X^4 - AX^2 + 2B^2 = 0,
    \]
    which has at most four solutions, proving \eqref{eq:bound4}.
\end{proof}

\begin{proof}[Proof of Theorem \ref{lem:resonant}]
    To prove the inequality \eqref{eq:cardinality} define
    \[
        k_{1,x} - k_x = a, k_{3,x}-k_x = b, k_{1,y}-k_y = c, k_{3,y}-k_y = d,
    \]
    and write $\Z_N = \Z\cap\{|k|\leq \N\}$. Since
    \[
    \begin{split}
        \Phi &= 2((k_{1,x}-k_x)(k_{3,x}-k_x) + 2(k_{1,y}-k_y)(k_{3,y}-k_y) \\
        &\qquad + \sqrt{2}((k_{1,x}-k_x)(k_{3,y}-k_y) + (k_{3,x}-k_x)(k_{1,y}-k_y)))
    \end{split}
    \]
    this shows that it is enough to find a lower bound on the cardinality of
    \[
        \tilde \Gamma = \{a,b,c,d \in \Z_N, |ab+2cd + \sqrt{2}(ad+bc)|\leq 1\},
    \]
    and we claim that
    \[
        |\tilde \Gamma| \gtrsim N^2 \log N.
    \]
    Indeed, write $ab + 2cd = p, ad + bc = q$. The number of $p,q \in \Z_{N^2}$ with
    \[
        |p + \sqrt{2}q| \leq 1
    \]
    is approximately $N^2$ since the latter condition defines a strip of width $1$ and length approximately $N^2$. We rewrite the transformation $ab + 2cd = p, ad + bc = q$ as
    \[
        \left(\begin{matrix} a& 2c\\ c& a\end{matrix}\right)\left(\begin{matrix}b\\d\end{matrix}\right) = \left( \begin{matrix}p\\q\end{matrix}\right).
    \]
    For each pair $(p,q)$ we find exactly one integer solution $(b,d)$ if
    \begin{equation}\label{eq:Pell}
        a^2 - 2c^2 = 1,
    \end{equation}
    because in this case the inverse matrix has integer coefficients. Thus to prove our claim it is enough to show that there are approximately $\log N$ integer solutions of \eqref{eq:Pell}.
    
    The Diophantine equation \eqref{eq:Pell}  is known as Pell's equation. A special solution of \eqref{eq:Pell} is given by
    \[
        a_1 = 3, c_1 = 2.
    \]
    Furthermore, we can construct infinitely many more solutions of the equation by setting
    \[
        \left( \begin{matrix}a_{n+1}\\c_{n+1}\end{matrix}\right) = \left(\begin{matrix} 3& 4\\ 2& 3\end{matrix}\right)\left(\begin{matrix}a_{n}\\c_n\end{matrix}\right).
    \]
    In particular, we can construct approximately $\log N$ many integer solutions of \eqref{eq:Pell} $a,c \in \Z_{N/2}$, which proves our claim.
\end{proof}

\newpage

\bibliographystyle{plain}

\end{document}